\newif\iffinal
\tikzset{>=stealth}
\definecolor{weakorange}{rgb}{1,.9,.2}
\newcommand{\newtheoremalias}[2]{%
\newaliascnt{#1}{theorem}%
\newtheorem{#1}[#1]{#2}%
\aliascntresetthe{#1}%
}
\theoremstyle{plain}
\newtheorem{theorem}{Theorem}[section]
\theoremstyle{definition}
\theoremstyle{definition}
\title{Integrability of Nonholonomically Coupled Oscillators}
\author[1,2]{Klas Modin\thanks{klas.modin@chalmers.se}}
\author[3]{Olivier Verdier\thanks{olivier.verdier@math.uib.no}}
\affil[1]{
	Department of Mathematics, University of Toronto, Canada
}
\affil[2]{
	Department of Mathematical Sciences, Chalmers University of Technology, Sweden
}
\affil[3]{
	Department of Mathematics,
	University of Bergen,
	Norway
}
\newcommand{\ii}{\mathrm{i}}
\newcommand{\pair}[1]{\left\langle #1 \right\rangle}
\providecommand{\vect}[1]{\boldsymbol{#1}}
\newcommand{\inv}{^{-1}}
\newcommand{\ud}{\mathrm{d}}
\newcommand{\pd}{\partial}
\newcommand{\R}{{\mathbb R}}
\newcommand{\Z}{{\mathbb Z}}
\providecommand{\SO}{\mathrm{SO}}
\providecommand{\so}{\mathfrak{so}}
\newcommand*\px{p_1}
\newcommand*\py{p_2}
\newcommand*\matA{\mathsf{A}}
\newcommand*\matB{\mathsf{B}}
\newcommand*\matAbar{\overline{\mathsf{A}}}
\newcommand*\Torus{\R/\Z}%{\mathbf{T}}
\newcommand*\perPhi{\flow(1)}
\newcommand*\Man{\mathcal{M}}
\newcommand*\thera{\tau}
\newcommand*\flow{\Phi}
\begin{document}

\maketitle

\begin{abstract}
	We study a family of nonholonomic mechanical systems.
	These systems consist of harmonic oscillators coupled through nonholonomic constraints.
	In particular, the family includes the so called \emph{contact oscillator}, which has been used as a test problem for numerical methods for nonholonomic mechanics.
	Furthermore, the systems under study constitute simple models for continuously variable transmission gearboxes.

	The main result is that each system in the family is integrable reversible with respect to the canonical reversibility map on the cotangent bundle.
	By using reversible Kolmogorov--Arnold--Moser theory, we then establish preservation of invariant tori for reversible perturbations.
	This result explains previous numerical observations, that some discretisations of the contact oscillator have favourable structure preserving properties.
\end{abstract}

\iffinal
\else
\tableofcontents
\fi

\section{Introduction} % (fold)
\label{sec:introduction}

\noindent
% Integrability of nonholonomic mechanical systems is a non
In this paper we study a family of continuous dynamical systems consisting of nonholonomically coupled oscillators.
The main result is that the family is integrable and that integrability is stable under reversible perturbations.
Although numerical discretisations are not discussed in the paper, our main motivation is to obtain a rigorous explanation of the numerical observations by \citet{McPe2006}, that some discretisations for nonholonomic problems have structure preserving properties.
% when applied to nonholonomic test problems.
% From our point of view, 

We continue the introduction with some background in Hamiltonian, nonholonomic, and reversible dynamics.
In \autoref{sec:main_results} we give a presentation of the family of problems under study, and we state the main results.
These results are then proved in \autoref{sec:proofs}.

The concept of Liouville--Arnold integrability is fundamental in the study of continuous Hamiltonian dynamical systems.
Through Kolmogorov--Arnold--Moser (KAM) theory, Liouville--Arnold integrability implies a foliation of the phase space in invariant tori, not only for the system itself, but also for ``nearby'' Hamiltonian systems (see \citet[Ch.~\!10]{Ar1989} and references therein).

By backward error analysis a numerical discretisation method for a continuous dynamical system can, at least formally, be interpreted as the exact solution of a nearby system~\cite[Ch.~\!IX]{HaLuWa2006}.
Roughly speaking, the KAM result then states that symplectic methods applied to Liouville--Arnold integrable systems preserve perturbed invariant tori~\cite{Sh1999b,Sh2000}.
This explains the excellent performance of symplectic methods for Hamiltonian systems.
% This advocates symplecticity as the principle geometric structure to preserve when discretising Hamiltonian systems.

In nonholonomic mechanics, there is no corresponding KAM theory, so the foliation in invariant tori are in general not stable with respect to perturbations (see the discussion in the end of \cite[\S~\!5.4.2]{ArKoNeKh2010}).
Nevertheless, integrability of nonholonomic systems is a well established research topic. %, in particular from the point of view of Hamilton--Jacobi theory. 
% There appears to be 
In the current literature there are three main techniques to obtain complete integrability of nonholonomic systems.
One technique uses reduction by symmetry, and examples can be found in \cite{BlMaZe2009}.
Next, the special case of systems of Chaplygin type is treated using a nonholonomic version of the Hamilton--Jacobi theory in \cite{IgDaLeMa2008,LeMaMa2010,OhFeBlZe2011}.
Finally, one can appeal to the existence of an invariant measure, which is the technique used in \cite{Ko2002} and \cite[\S~\!5.4]{ArKoNeKh2010}.
All these techniques depend on some special property of the nonholonomic system at hand, and our case is no exception.
Indeed, the essential property we use is that the underlying differential equation is a non-autonomous linear system whose fundamental matrix belongs to~$\SO(n)$.

For discussions on numerical discretisations of nonholonomic problems we refer to~\cite{McPe2006,FeIgMa2008,FeIgMa2009,KoMaSu2010,KoMaDiFe2010}, to the monograph by \citet[\S~\!7]{Co2002} and to the references therein.
Due to the lack of KAM theory for nonholonomic mechanics, it is unclear which structure to preserve when discretising nonholonomic systems in order for the modified differential equation to remain integrable.
% As there is no KAM theory specifically for nonholonomic mechanics, when discretising nonholonomic systems it is unclear which structure to preserve in order for the modified differential equation to remain integrable.

% We now present the family of systems under study.
Let $Q$ be a smooth configuration manifold of dimension~$n$.
The corresponding phase space is given by the cotangent bundle $T^{*}Q$.
Recall that $T^{*}Q$ comes with a canonical symplectic form, which, in local canonical coordinates $(q_{i},p_{i})$, is given by $\omega = \sum_i \ud q_{i}\wedge\ud p_{i}$.
In addition, $T^{*}Q$ comes with a canonical reversibility map (as does any vector bundle), namely the bundle map $\rho\colon T^{*}Q\to T^{*}Q$ given in local coordinates by $(q_{i},p_{i}) \mapsto (q_{i},-p_{i})$.
% Notice that $\rho$ is an involution, i.e., $\rho^{2} = \mathrm{id}$.
A diffeomorphism $\psi$ of $T^*Q$ is said to be \emph{reversible} if $\rho\circ\psi = \psi^{-1}\circ\rho$.
A continuous dynamical system on $T^{*}Q$ with flow map $\varphi^{t}$ is called reversible if the flow map is reversible.
If $X$ is the vector field that generates $\varphi^{t}$, then the system is reversible if and only if $T\rho\circ X = -X\circ\rho$, or equivalently, $\rho_{*}X = -X$, where $\rho_*X$ is the push-forward of the vector field (see, e.g.,~\cite[\S\!~4.3]{MaRa1999}).
A reversible system is called \emph{integrable reversible} if $T^{*}Q$ can be foliated into invariant tori, which are also invariant with respect to the reversibility map~$\rho$.
That is, if there exists a reversible change of coordinates into action--angle variables~\cite[Def.\!~XI.1.1]{HaLuWa2006}.

There are various ways to define nonholonomic systems.
For an overview, see the monograph by \citet[Ch.\!~5]{Bl2003}.
In our setting, a nonholonomic system is defined by a triple $(Q,\mathcal{D},H)$, where~$Q$ is a configuration manifold, $\mathcal{D}$ is a regular distribution on $Q$, i.e., a vector subbundle of $TQ$, and $H$ is a Hamiltonian, i.e., a smooth function on $T^{*}Q$.
The constraints are given by $\dot{\vect{q}}\in \mathcal{D}$.
Equivalently, if $\mathcal{D}^{*}\subset T^{*}Q$ is the annihilator of $\mathcal{D}$, then the constraints are $\pair{\mathcal{D}^{*},\dot{\vect{q}}} = 0$.
If the one-forms $\tau_{1},\ldots,\tau_{k}$ span the codistribution $\mathcal{D}^{*}$, then the Hamiltonian version of the Lagrange--d'Alembert principle yields the governing equations (see \cite[\S\!~5.2]{Bl2003})
\begin{equation}\label{eq:nonhol_system_general}
	\begin{split}
		\dot q_{i} &= \frac{\pd H}{\pd p_{i}}, \\
		\dot p_{i} &= -\frac{\pd H}{\pd q_{i}} + \sum_{j=1}^{k} \lambda_j \tau_{ij}, \\
		% 0 &= \pair{\tau^{j},\frac{\pd H}{\pd \vect{p}}}, \quad j=1,\ldots,k,
		0 &= \sum_{i}\tau_{ij} \frac{\pd H}{\pd p_i}, \quad j=1,\ldots,k,
		% \pair{\ud q,\dot q}\ud p - H_p\ud p &\in \mathcal{D}^{*} \\
		% \pair{\ud p,\dot p}\ud q - H_q\ud q &\in  
	\end{split}
\end{equation}
where the quantities $\tau_{ij}$ are defined by $\tau_{j} = \sum_i \tau_{ij}\ud q_{i}$, and $\lambda_{j}$ are Lagrange multipliers.

% We assume that equation~\eqref{eq:nonhol_system_general} is well posed, i.e., that it defines a one-parameter flow $\varphi^{t}:\mathcal{M}\to\mathcal{M}$, where 
We assume that the differential algebraic equation~\eqref{eq:nonhol_system_general} defines an underlying ordinary differential equation (ODE) on the manifold
\begin{equation}\label{eq:cobundle_distribution}
	\mathcal{M} = \Big\{\, (\vect{q},\vect{p})\in T^{*}Q; \Big\langle{\mathcal{D}^{*},\frac{\pd H}{\pd \vect{p}}}\Big\rangle=0 \,\Big\}.
\end{equation}

Notice that if $H$ is regular, which we assume, then $\mathcal{M}$ is a fibre bundle over $Q$.
$\mathcal{M}$ is a vector subbundle of $T^{*}Q$ if and only if $H$ is quadratic in~$\vect{p}$ (which is typically the case).
It is closed under the canonical reversibility map $\rho$ on $T^{*}Q$ if and only if the Hamiltonian is reversible, i.e., $H \circ \rho = H$, and in this case the corresponding dynamical system on $\mathcal{M}$ is reversible, i.e., $\rho\circ\varphi^{t} = \varphi^{-t}\circ\rho$.
% If~$\mathcal{M}$ is closed under the canonical reversibility map $\rho$ on $T^{*}Q$ (which is always true when~$\mathcal{M}$ is a vector subbundle), then it makes sense to talk about reversibility of the flow $\varphi^{t}$ on~$\mathcal{M}$, i.e., $\rho\circ\varphi^{t} = \varphi^{-t}\circ\rho$.
% If the flow $\varphi^{t}$ on $\mathcal{M}$ is reversible, then its corresponding generating vector field $X = \frac{\ud}{\ud t}|_{t=0}\varphi^{t}$ is automatically reversible.
We say that the nonholonomic system~\eqref{eq:nonhol_system_general} is integrable reversible if its corresponding system on $\mathcal{M}$ is integrable reversible.

\section{Main results} % (fold)
\label{sec:main_results}

We consider a family of nonholonomic systems on the configuration space $Q=\R^{3}$, with a distribution of the form
\begin{equation}\label{eq:codistribution}
	% \mathcal{D} = \
	\mathcal{D} = \Big\{\, (q_1,q_2,q_3,\dot q_1,\dot q_2,\dot q_3)\in T\R^{3}; f(q_3)\dot q_1 + \dot q_2 = 0  \,\Big\}
\end{equation}
where $f$ is a smooth function on~$\R$, and a Hamiltonian function on $T^{*}\R^{3}$ of the form
\begin{equation}\label{eq:main_perturbed_Hamiltonian}
	H_{0}(\vect{q},\vect{p}) = \frac{1}{2}\sum_{i=1}^{2}\Big( \frac{p_i^{2}}{m_i} + k_i q_i^2  \Big) + F(q_3,p_3) ,
\end{equation}
where $m_i>0$, $k_i>0$, $\varepsilon\geq 0$ are constants and $F$ is a smooth, reversible and regular Hamiltonian function on $T^{*}\R$.
% $H$ an arbitrary smooth Hamiltonian function on~$\R^{3}$.
% Let $(q_1,q_2,q_3,p_1,p_2,p_3)$ be canonical coordinates on $T^{*}Q$.
From \eqref{eq:nonhol_system_general} we get the governing equations as
\begin{equation}\label{eq:main_system}
	\begin{aligned}
		\dot q_1 &= \frac{p_1}{m_1}, & %\qquad\qquad\;
		\dot p_1 &= -k_1 q_1  + \lambda f(q_3), \\
		\dot q_2 &= \frac{p_2}{m_2}, & %\qquad\qquad\;
		\dot p_2 &= -k_2 q_2 + \lambda , \\
		\dot q_3 &= \frac{\pd F}{\pd p_3}, & %\quad
		\dot p_3 &= -\frac{\pd F}{\pd q_3}, \\
		& & 0 &= f(q_3)\dot q_1 + \dot q_2 . %& &
	\end{aligned}
\end{equation}
From a mechanical point of view, this system consists of two linear oscillators, coupled through a single nonholonomic constraint which depends on the independent Hamiltonian subsystem described by the Hamiltonian $F(q_3,p_3)$.

\begin{figure}
	\centering
	\begin{tikzpicture}[
labarrow/.style={very thick,draw,->,color=black!70},
	]
	\node[anchor=south west, inner sep=0] (image) at (0,0) {\includegraphics[width=0.7\textwidth]{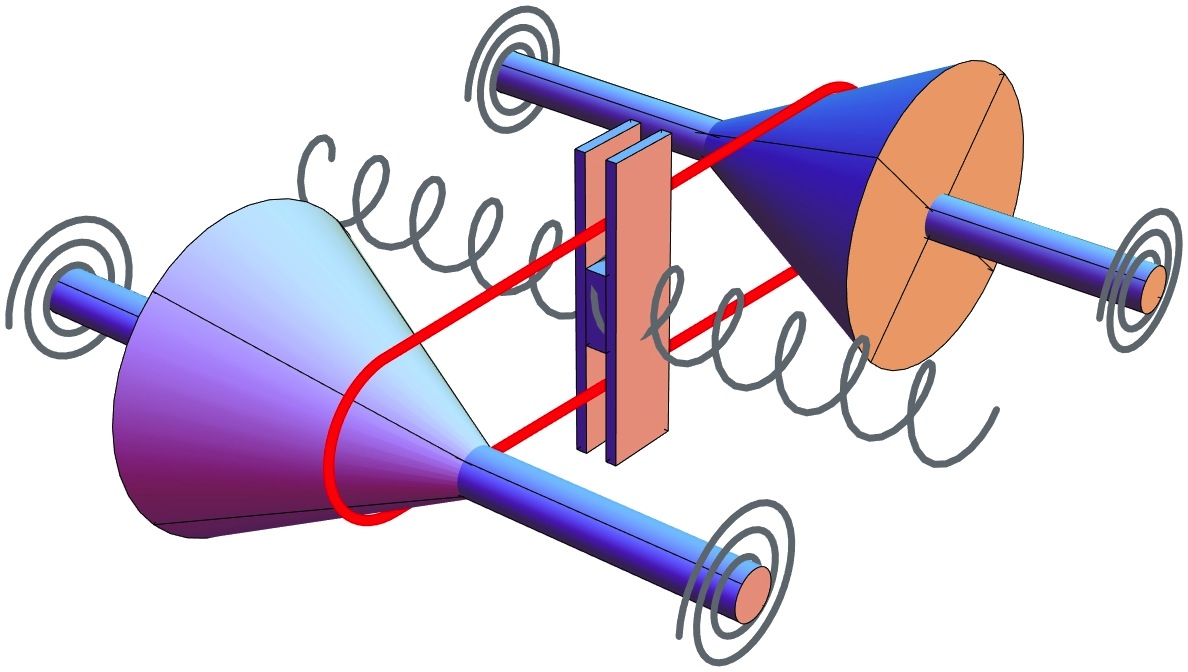}};
	\begin{scope}[x={(image.south east)},y={(image.north west)}]
	\iffinal\else
\draw[help lines,xstep=.1,ystep=.1] (0,0) grid (1.21,1.21);
\foreach \x in {0,1,...,12} { \node [anchor=north] at (\x/10,0) {0.\x}; }
\foreach \y in {0,1,...,12} { \node [anchor=east] at (0,\y/10) {0.\y}; }
\fi
	\coordinate (q1) at (0.75,0.04) {};
	\coordinate (q2) at (1.1,0.5) {};
	\coordinate (q3) at (0.9,0.3) {};
	\node[right] at (q1) {$q_1$};
	\node[right] at (q2) {$q_2$};
	\path[labarrow] (q3) -- ++(-25:8mm);
	\node[right=3mm,] at (q3) {$q_3$};
	\begin{scope}[x={(1cm,1.2cm)},y={(0cm,1.4cm)},]
	\path[labarrow] (q1)  arc[radius=.2,start angle=-30, delta angle=320];
	\path[labarrow] (q2)  arc[radius=.2,start angle=-30, delta angle=320];
	\end{scope}
	\end{scope}
	\end{tikzpicture}
	\caption{Illustration of a continuously variable transmission gearbox for which the dynamics is governed by the nonholonomic system~\protect\eqref{eq:main_system}.
	The belt between the two cones is translated along the shafts in accordance with the $(q_3,p_3)$~subsystem, thus providing a varying transmission ratio.
	The belt is kept in a plane perpendicular to the shafts, so that the belt keeps a constant length.
	The variables $q_1$ and $q_2$ denote the angular deflections of the shafts.
	\iffinal
	\else
	An animation of the system near a resonance frequency is available at 
	\fi
\todo[author=KM,inline]{Upload the video somewhere on internet and link to it here.}
	}
	\label{fig:elbargetni_machine}
\end{figure}

\begin{remark}
	System~\eqref{eq:main_system} is a simple model for a continuously variable transmission (CVT) gearbox, see~\autoref{fig:elbargetni_machine} for an illustration.
	Indeed, the shafts are attached to spiral springs that are fixed to a chassis, and the system governing $q_3$ is an arbitrary Hamiltonian system with Hamiltonian $F$, which explains the form of the Hamiltonian~\eqref{eq:main_perturbed_Hamiltonian}.
	Now the belt imposes a constraint between the rotation velocities $\dot{q_1}$ and $\dot{q_2}$ which, assuming that both cones have the same size, is determined by $q_3$ as $q_3 \dot{q_1} = (1-q_3) \dot{q_2}$.
	If we assume that $q_3<1$ (which correspond to assuming that the gear ratio is finite), then we obtain a distribution of the form~\eqref{eq:codistribution}.
	As a result, the system~\eqref{eq:main_system} describes a CVT gearbox where the ``driver'' is continuously and periodically shifting according to the independent subsystem $(q_3,p_3)$, and $f(q_3)$ describes the transmission ratio.

	The conceptualisation of CVTs is attributed to \href{http://en.wikipedia.org/wiki/Leonardo_da_Vinci}{Leonardo da Vinci} in 1490.
	In~1879, \href{http://en.wikipedia.org/wiki/Milton_Reeves}{Milton Reeves} invented a CVT for saw milling, which he later used also for cars.
	Today, CVT gearboxes are used in a variety of vehicles, particularly small tractors, snowmobiles, and motorscooters.
\end{remark}

\begin{theorem}\label{thm:main}
	If the $(q_3,p_3)$ subsystem described by the Hamiltonian~$F$ is integrable reversible, then the nonholonomic system~\eqref{eq:main_system} is integrable reversible, or, more precisely, there exists action-angle variables $(a,b,c,\theta,\varphi) \in \R^{3}\times (\Torus)^{2}$ for which the underlying ODE of~\eqref{eq:main_system} takes the form
	\begin{equation}\label{eq:action_angle_main_system}
		\begin{aligned}
			\dot a &= 0, & \dot\theta &= \omega(a), \\
			\dot b &= 0, & \dot\varphi &= \xi(a), \\
			\dot c &= 0, & &
		\end{aligned}
	\end{equation}
	and for which the reversibility map is $(a,b,c,\theta,\varphi) \mapsto (a,b,c,-\theta,-\varphi)$.
\end{theorem}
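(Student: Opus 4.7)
The plan is to exploit the one-way coupling in~\eqref{eq:main_system}: the $(q_3,p_3)$-subsystem is autonomous and, by assumption, integrable reversible, hence carries action-angle coordinates $(a,\varphi)$ with $\dot a = 0$, $\dot\varphi = \xi(a)$ and period $T(a) = 2\pi/\xi(a)$ on each level set. After fixing such a trajectory, $f(q_3(t))$ becomes a $T(a)$-periodic input; one differentiation of the constraint and substitution of the equations of motion will then express $\lambda$ as a linear function of $(q_1,q_2,p_1,p_2)$, so that the remaining equations form a non-autonomous linear system $\dot x = A(t)\,x$ on $\R^{4}$. A direct calculation gives $\dot H_0 = \lambda(f\dot q_1 + \dot q_2) = 0$ along orbits in~$\mathcal{M}$; since $F$ is independently conserved, so is the oscillator energy $E_{12}:=\tfrac12\sum_{i=1,2}(p_i^{2}/m_i+k_i q_i^{2})$. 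Writing $E_{12} = \tfrac12 x^\trans Q x$ with $Q$ constant and positive definite, $A(t)$ is infinitesimally $Q$-skew along orbits in~$\mathcal{M}$.

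Next, I would restrict attention to the time-dependent three-dimensional slice $\mathcal{M}_t := \{f(q_3(t))p_1/m_1 + p_2/m_2 = 0\}$. Since $\mathcal{M}_{T(a)} = \mathcal{M}_0$, the period map of the linear flow restricts to an isometry $\Phi_a \in \SO(3)$ of $(\mathcal{M}_0, Q)$. Every such rotation has an axis $\ell(a)\subset \mathcal{M}_0$ and a rotation angle $\alpha(a)$ in the $Q$-orthogonal plane, and the Floquet theorem will decompose the flow as $\Phi(t,0) = P(t)\exp(tB_a)$ with $P$ of period $T(a)$ and $B_a \in \mathfrak{so}(Q)$ having axis $\ell(a)$. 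Transporting $\ell(a)$ along the $(q_3,p_3)$-flow yields a smooth invariant line field on $\mathcal{M}$, whose signed $Q$-projection supplies the third integral $c$; together with $b := E_{12}$ and $a := F$, this produces three functionally independent first integrals whose generic joint level sets are two-tori.

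Finally, $\varphi$ is inherited from the $(q_3,p_3)$-subsystem and $\theta$ can be taken as the angle of $P(t)^{-1}x(t)$ in the $Q$-orthogonal complement of $\ell(a)$; by the Floquet decomposition, $\dot\theta = \omega(a)$ is constant, where $\omega(a)$ is the rotation rate of $\exp(tB_a)$. Reversibility will follow because $\rho$ preserves both $Q$ and $\mathcal{M}$, and the reversibility of the full flow specialises at $t = T(a)$ to $\rho\,\Phi_a\,\rho = \Phi_a^{-1}$, so $\rho$ acts on $\mathcal{M}_0$ as an involutive $Q$-isometry conjugating $\Phi_a$ to its inverse — hence fixing $\ell(a)$ pointwise and reversing the rotation direction, yielding $(a,b,c)\mapsto(a,b,c)$ and $(\theta,\varphi)\mapsto(-\theta,-\varphi)$. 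I expect the main technical obstacle to be the smooth dependence of $\ell(a)$ (equivalently, of $B_a$) on $a$, and the resulting global single-valuedness of the action-angle coordinates: this can fail at resonant values where $\alpha(a)\equiv 0 \pmod{2\pi}$ and $\Phi_a$ degenerates to the identity, so handling such resonances — or restricting to the open dense non-resonant set of $a$ — is where the core analytic work lies.
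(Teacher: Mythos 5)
Your overall route is essentially the paper's: you reduce the $(q_1,q_2)$-dynamics on the constraint manifold to a time-periodic linear system whose coefficient matrix is skew with respect to the conserved oscillator energy, take a real logarithm of the $\SO(3)$ monodromy (Floquet), read off the third invariant from the rotation axis and the second angle from the uniform rotation in the orthogonal plane; your time-dependent slices $\mathcal{M}_t\subset\R^{4}$ equipped with $Q=E_{12}$ are the paper's explicit reduced coordinates $(q_1,q_2,p)$ in disguise, and your $P(t)^{-1}x(t)$ is exactly the paper's change of variables $\vect v=\exp(\matAbar\theta)\flow(\theta)^{-1}\vect u$.

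The one inference that does not hold as stated is in the reversibility step: from ``$\rho$ is an involutive $Q$-isometry with $\rho\,\Phi_a\,\rho=\Phi_a^{-1}$'' you conclude that $\rho$ fixes the axis $\ell(a)$ pointwise. Conjugating a rotation to its inverse only gives $\rho(\ell(a))=\ell(a)$; an involution may act as $-1$ on the axis (a half-turn about a line orthogonal to the axis also conjugates the rotation to its inverse while flipping the axis), and in that case your invariant would transform as $c\mapsto -c$, destroying the claimed form of the reversibility map. To exclude this you must use the specific structure of $\rho$ on the slice: its fixed-point set there is the two-plane $\{p_1=p_2=0\}$, so if $\ell(a)$ lay in the $(-1)$-eigenline, the relation $\rho\,\Phi_a\,\rho=\Phi_a^{-1}$ would force the rotation angle to be $0$ or $\pi$. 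This is precisely the half-rotation obstruction the paper isolates as the spectral condition \eqref{eq:nohalfrotation} (equivalently $\rho\matAbar\rho=-\matAbar$, whence the axis vector is $\rho$-fixed), and it is discussed in the remark following the reversibility theorem; your closing caveat mentions only the resonance $\alpha(a)\equiv 0\pmod{2\pi}$, whereas $\alpha(a)=\pi$ is the genuinely delicate case. Two smaller omissions: the case $\xi(a)=0$, where there is no period $T(a)$ (the paper treats this separately, the coefficient matrix then being constant), and the justification that the monodromy is orientation-preserving together with a choice of unit axis vector smooth in $a$, which your construction of $c$ and of the Floquet logarithm silently requires.
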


The proof is given in \autoref{sec:proofs} below.

Consider now a perturbed Hamiltonian $H_\varepsilon = H_0 + \varepsilon G$, where $\varepsilon\geq 0$ and $G$ is a reversible smooth function on $T^{*}\R^{3}$.
We need a Lemma on perturbations of reversible nonholonomic systems, which we specialise to our case.

\begin{lemma}
\label{lem:perturbation}
For small enough $\varepsilon$, the underlying ODE of the nonholonomic system defined by $(\R^{3}, \mathcal{D}, H_{\varepsilon})$ is isomorphic to a reversible perturbation of the underlying ODE of $(\R^3, \mathcal{D}, H_0)$. 
\end{lemma}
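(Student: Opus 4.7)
The plan is to exhibit a reversibility-equivariant diffeomorphism between the constraint manifold $\mathcal{M}_0$ of the unperturbed system and the constraint manifold $\mathcal{M}_\varepsilon$ of the perturbed one, and to conjugate the underlying vector fields through it.

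First, I would analyse the defining function of the constraint manifold~\eqref{eq:cobundle_distribution}. For the distribution~\eqref{eq:codistribution} it reads
\[
g_\varepsilon(\vect{q},\vect{p}) := f(q_3)\,\frac{\pd H_\varepsilon}{\pd p_1} + \frac{\pd H_\varepsilon}{\pd p_2},
\]
so that $\mathcal{M}_\varepsilon = g_\varepsilon\inv(0)$. Since both $H_0$ and $G$ are reversible, each $\pd H_\varepsilon/\pd p_i$ is odd under $\rho$, and hence $g_\varepsilon\circ\rho = -g_\varepsilon$; in particular $\mathcal{M}_\varepsilon$ is $\rho$-invariant. At $\varepsilon=0$ one has $g_0 = f(q_3)p_1/m_1 + p_2/m_2$, so $\pd g_0/\pd p_2 = 1/m_2 \neq 0$, and the implicit function theorem supplies, for $\varepsilon$ small, a smooth function $\phi_\varepsilon(\vect{q},p_1,p_3)$ depending smoothly on $\varepsilon$ such that $\mathcal{M}_\varepsilon = \{p_2 = \phi_\varepsilon(\vect{q},p_1,p_3)\}$, with $\phi_0(\vect{q},p_1,p_3) = -m_2 f(q_3)p_1/m_1$.

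The key symmetry step is to combine $g_\varepsilon\circ\rho = -g_\varepsilon$ with uniqueness in the implicit function theorem to conclude
\[
\phi_\varepsilon(\vect{q},-p_1,-p_3) = -\phi_\varepsilon(\vect{q},p_1,p_3).
\]
I would then define $\Psi_\varepsilon\colon\mathcal{M}_0\to\mathcal{M}_\varepsilon$ by holding $(\vect{q},p_1,p_3)$ fixed and moving the $p_2$ coordinate from $\phi_0$ to $\phi_\varepsilon$; the symmetry above is exactly what is needed for $\Psi_\varepsilon$ to intertwine $\rho|_{\mathcal{M}_0}$ with $\rho|_{\mathcal{M}_\varepsilon}$. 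Thus $\Psi_\varepsilon$ is a smooth, reversibility-equivariant family of diffeomorphisms with $\Psi_0 = \mathrm{id}$.

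Finally, let $Y_\varepsilon$ denote the underlying vector field on $\mathcal{M}_\varepsilon$. The pull-back $X_\varepsilon := (\Psi_\varepsilon)^{*}Y_\varepsilon$ is then a smooth family of vector fields on the fixed manifold $\mathcal{M}_0$ with $X_0$ equal to the underlying vector field of $(\R^{3},\mathcal{D},H_0)$, which yields the sought reversible perturbation. Reversibility of each $Y_\varepsilon$ on $\mathcal{M}_\varepsilon$ follows from reversibility of $H_\varepsilon$ just as it did for $H_0$, and reversibility of $X_\varepsilon$ on $\mathcal{M}_0$ then follows from the $\rho$-equivariance of $\Psi_\varepsilon$. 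The only delicate point, and the one on which I would focus most effort, is the symmetry identity for $\phi_\varepsilon$: it distinguishes this construction from a generic implicit-function identification and is what guarantees that the family $X_\varepsilon$ is truly a \emph{reversible} perturbation of $X_0$.
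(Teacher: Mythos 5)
Your proof is correct, and at heart it follows the same strategy as the paper: build a $\rho$-equivariant diffeomorphism between $\mathcal{M}_\varepsilon$ and $\mathcal{M}_0$, depending smoothly on $\varepsilon$ and reducing to the identity at $\varepsilon=0$, then transport the (reversible) underlying vector field to the fixed manifold $\mathcal{M}_0$ to obtain a reversible $\mathcal{O}(\varepsilon)$ perturbation of $X_0$. The difference lies in how the identification is constructed. The paper does it intrinsically, as the composition $\big(\frac{\pd H_0}{\pd \vect{p}}\big)^{-1}\circ\frac{\pd H_\varepsilon}{\pd \vect{p}}$, i.e.\ it passes through the distribution $\mathcal{D}$ using the fibre derivatives of the two Hamiltonians; reversibility of this map is immediate because both fibre derivatives are odd under $\rho$, and no coordinates or choice of which momentum to eliminate are needed. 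You instead realise $\mathcal{M}_\varepsilon$ as a graph $p_2=\phi_\varepsilon(\vect{q},p_1,p_3)$ via the implicit function theorem (using $\pd g_0/\pd p_2=1/m_2\neq0$) and prove the oddness identity $\phi_\varepsilon(\vect{q},-p_1,-p_3)=-\phi_\varepsilon(\vect{q},p_1,p_3)$ from $g_\varepsilon\circ\rho=-g_\varepsilon$ and uniqueness, which is exactly what makes your graph-shifting map equivariant; this is more hands-on and exploits the specific form of the constraint (nonvanishing coefficient of $\ud q_2$), whereas the paper's version applies verbatim to the general setting of \autoref{sec:introduction}. Both arguments share the same implicit caveat, namely that ``small enough $\varepsilon$'' (invertibility of the fibre map, respectively global validity of the graph representation) is asserted without uniformity estimates on the noncompact phase space, so on that score your proof is at the same level of rigour as the paper's.
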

\begin{proof}
	First, notice that $H_\varepsilon$ is reversible and for small enough~$\varepsilon$ it is regular.
	Let $\mathcal{M}_\varepsilon$ be the subbundle given by~\eqref{eq:cobundle_distribution} with $H = H_\varepsilon$, and let $X_\varepsilon$ be the corresponding reversible vector field on $\mathcal{M}_\varepsilon$ that generates the dynamics.
	For small enough $\varepsilon$ the map $\frac{\pd H_\varepsilon}{\pd \vect{p}}\colon \mathcal{M}_\varepsilon \to \mathcal{D}$ is a reversible bundle automorphism.
	Thus, $X_\varepsilon$ induces a reversible vector field $X_{0,\varepsilon}$ on $\mathcal{M}_0$ by
	\begin{equation}\label{eq:induced_vf}
		X_{0,\varepsilon} = \Big(\frac{\pd H_0}{\pd \vect{p}}^{-1}\circ \frac{\pd H_\varepsilon}{\pd \vect{p}} \Big)_* X_{\varepsilon}.
	\end{equation}
	This vector field is an $\varepsilon$~perturbation of $X_0$, i.e., $X_{0,\varepsilon} = X_0 + \mathcal{O}(\varepsilon)$.
\end{proof}

An integrable system is called \emph{KAM stable} if ``almost every'' invariant torus, in the sense made precise e.g.\ in \cite{Se1995,Se1998}, is stable under either Hamiltonian or reversible perturbations.
By using \autoref{thm:main} and \autoref{lem:perturbation} we can now use the reversible KAM theorem on the system~\eqref{eq:main_system} to show KAM stability under reversible perturbations.

\begin{corollary}\label{cor:KAM_result}
	If
	\begin{enumerate}
		\item the $(q_3,p_3)$ subsystem is integrable reversible,
		\item $f$ and $F$ are real analytic, and
		\item the frequency functions $\omega(a)$ and $\xi(a)$ in~\eqref{eq:action_angle_main_system} are linearly independent, i.e., there is no constant~$c\in\R$ such that $\omega(\cdot) = c\, \xi(\cdot)$,
	\end{enumerate}
	% If $f$ and $F$ are real analytic, the $(q_3,p_3)$ subsystem is integrable reversible, and the frequency functions $\omega(a)$ and $\xi(a)$ in~\eqref{eq:action_angle_main_system} are linearly independent, i.e., there is no constant~$c$ such that $\omega(\cdot) = c \xi(\cdot)$, 
	then the foliation into tori of the underlying ODE of~\eqref{eq:main_system} is KAM stable under reversible perturbations.
\end{corollary}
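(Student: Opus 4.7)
The plan is to combine \autoref{thm:main}, \autoref{lem:perturbation}, and the reversible KAM theorem in the form given by \citet{Se1995,Se1998}. First I would note that, under the real-analyticity of $f$ and $F$, the action--angle coordinates produced by \autoref{thm:main} can be chosen real analytic: inspecting the construction that will appear in \autoref{sec:proofs}, the transformation is built from the flow of the $(q_3,p_3)$~subsystem together with the fundamental matrix in $\SO(2)$ associated with the linear block, both of which depend analytically on analytic data. Hence, in these coordinates, the unperturbed vector field $X_0$ is real analytic and has the integrable form~\eqref{eq:action_angle_main_system}, and the canonical reversibility map $\rho$ is conjugated to $R\colon(a,b,c,\theta,\varphi)\mapsto(a,b,c,-\theta,-\varphi)$.

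Next, \autoref{lem:perturbation} produces, for each sufficiently small $\varepsilon$, a reversible vector field $X_{0,\varepsilon}=X_0+\mathcal{O}(\varepsilon)$ on $\mathcal{M}_0$ isomorphic to the underlying ODE of $(\R^3,\mathcal{D},H_\varepsilon)$. Since the conjugating bundle map in~\eqref{eq:induced_vf} depends analytically on $\varepsilon$ and on the data, $X_{0,\varepsilon}$ pulls back under the action--angle chart to an analytic reversible perturbation of~\eqref{eq:action_angle_main_system}. We are therefore in the standard setting for reversible KAM theory, applied to the integrable $R$-reversible vector field $\dot a=\dot b=\dot c=0$, $\dot\theta=\omega(a)$, $\dot\varphi=\xi(a)$ on $\R^3\times(\Torus)^2$.

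The remaining step is the verification of the non-degeneracy hypothesis. The unperturbed system carries a 3-parameter family of 2-tori in a 5-dimensional phase space, and the $R$-fixed set meets each torus in four points; this matches the Sevryuk framework for atypical reversible tori. The Kolmogorov-style determinant $\det(\partial\omega/\partial a)$ is degenerate here because the two frequencies depend only on the single action $a$, so the appropriate non-degeneracy is of Rüssmann type: the image of the frequency curve $a\mapsto(\omega(a),\xi(a))$ lies in no line through the origin of $\R^2$. This is precisely hypothesis~(3), and together with analyticity it yields, via the reversible KAM theorem, a positive-measure Cantor family of 2-tori of $X_{0,\varepsilon}$ on which the flow is analytically conjugate to a Diophantine quasi-periodic rotation, delivering the claimed KAM stability.

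The main obstacle, in my judgment, is pairing the correct version of the reversible KAM theorem with the somewhat unusual integrable geometry at hand: both the excess actions $b,c$ (whose conjugate angles are absent) and the ``over-dimensional'' $R$-fixed set must be accommodated, and the standard Kolmogorov non-degeneracy fails outright. Verifying that Sevryuk's parameter-dependent reversible KAM theorem (or equivalently the Broer--Huitema--Sevryuk framework) applies in this atypical regime, with Rüssmann non-degeneracy supplying the required transversality, is where the real technical care lies; hypothesis~(3) is designed precisely to ensure this.
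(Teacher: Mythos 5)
Your proposal is correct and follows essentially the same route as the paper: it combines \autoref{thm:main} (which also yields analyticity of $\omega$ and $\xi$ when $f$ and $F$ are analytic), \autoref{lem:perturbation} to transfer reversible perturbations to the underlying ODE, and Sevryuk's reversible KAM theorem, whose non-degeneracy condition is exactly that the frequency curve $a\mapsto(\omega(a),\xi(a))$ not lie on a line through the origin, i.e.\ hypothesis~(3). The extra discussion of Rüssmann-type non-degeneracy and the structure of the $R$-fixed set elaborates what the paper delegates to the citation of \cite[Thm.\!~1]{Se1998}, but the argument is the same.
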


\begin{proof}
	If the $(q_3,p_3)$ subsystem is integrable reversible it follows from \autoref{thm:main} that the underlying ODE of system~\eqref{eq:main_system} is integrable reversible.
	If $f,F$ are real analytic it follows from the proof of \autoref{thm:main} (see \autoref{sec:proofs} below) that $\omega$ and $\xi$ are real analytic.
	From \autoref{lem:perturbation} it follows that a reversible perturbation of~\eqref{eq:main_system} corresponds to a reversible perturbation of the underlying ODE of~\eqref{eq:main_system}.
	The KAM theorem, as given in~\cite[Thm.\!~1]{Se1998}, then states that the system~\eqref{eq:action_angle_main_system} is KAM stable under reversible perturbations if and only if the image of the map $a\mapsto \big(\omega(a),\xi(a)\big)$ does not lie on a straight line passing through origo, which is exactly the last assumption in \autoref{cor:KAM_result}.
\end{proof}

\begin{remark}
	The system~\eqref{eq:main_system} with $\varepsilon=0$, $k_i=m_i=1$, $F(q_3,p_3) = p_3^{2}/2 + q_3^{2}/2$ and the coupling function $f$ is defined as $f(q_3) := q_3$, was used by \citet[\S~\!5.1]{McPe2006} as a nonholonomic test problem (called the \emph{contact oscillator}) for various numerical discretisations.
	% Without explicitly working out the action--angle variables, 
	The authors showed that this system has quasiperiodic orbits, and they gave numerical results indicating that quasiperiodicity is preserved by some reversible integrators.
	Moreover, the authors study reversible perturbations of the type described in \autoref{lem:perturbation}, and show by numerical experiments that the same reversible methods seem to preserve the invariant tori foliation, when the perturbations are small.
	By backward error analysis, a reversible method can be interpreted (up to exponentially small terms) as the exact solution of a reversible perturbation of the original nonholonomic system.
	Therefore, \autoref{thm:main} and \autoref{cor:KAM_result} provide a rigorous explanation of the numerical observations by~\citet{McPe2006}.
	% Unfortunately, we cannot directly apply the reversible KAM theory, because the angular average of the integrable system depends on only one of the invariants (see \autoref{sec:wrapup}).
	% However, we expect our result to be a significant step in the understanding of the numerical observations by~\citet{McPe2006}.
\end{remark}

\todo[author=OV,inline]{Remark on the form of the codistribution. We assume $u_1 \ud q_1 + u_2 \ud q_2$, with $u_1\neq 0$. This restriction seems to be fundamental, and shapes the geometry of $\Man$.}

% Consider now a smooth Hamiltonian $F$ on 

% section main_results (end)

\section{Proof of \autoref{thm:main}} % (fold)
\label{sec:proofs}

The proof involves the following steps:
\begin{enumerate}
	\item Derive the ordinary differential equation constituting the dynamical system on~$\mathcal{M}_0$.
	\item Derive action-angle variables.
	\item Show that the transformation to action-angle variables preserves reversibility.
\end{enumerate}

\subsection{Ordinary differential equation} % (fold)
\label{sub:step_1}

Our aim is to reduce the constrained system~\eqref{eq:main_system} into an ordinary differential equation.
From the governing equation~\eqref{eq:main_system} it follows that~$p_2/m_2=-f(q_3) p_1/m_1$, which implies that the energy can be written
\begin{equation}
	H = \frac{p^2}{2} + \frac{k_1q_1^{2} + k_2q_2^{2}}{2} + F(q_3,p_3),
\end{equation}
where $p:=\frac{1}{\alpha_1(q_3)}\, p_1$, and $\alpha_1(q_3)$ is defined as $\alpha_1(q_3) := \Big(\frac{1+\frac{m_2}{m_1}f(q_3)^2}{m_1}\Big)^{-1/2}$.
We also define $\alpha_2(q_3) := -\frac{m_2}{m_1}f(q_3)\alpha_1(q_3)$, so that $p_2 = \alpha_2(q_3) p$.

By differentiating the identity $p = \frac{1}{\alpha_1(q_3)}p_1$ we obtain
\begin{equation}
\ud p = \alpha_1(q_3) \frac{\ud p_1}{m_1} + \alpha_2(q_3) \frac{\ud p_2}{m_2}
,
\end{equation}
so
\begin{equation}
\dot{p} = \alpha_1(q_3) \frac{\dot{\px}}{m_1} + \alpha_2(q_3)\frac{\dot{\py}}{m_2}
.
\end{equation}

By replacing $\dot{\px}$ and $\dot{\py}$ in the equation above, and by substituting for $p$ in the equation for $\dot{q}_1$ and $\dot{q}_2$,  we now obtain the following system.
%a dynamical system described by the variables $(x,y,p)$.
% \begin{subequations}
\begin{equation}\label{eq:contact_system_reduced}
	\begin{split}
		\dot q_1 &= \frac{\alpha_1(q_3)}{m_1} p \\
		\dot q_2 &= \frac{\alpha_2(q_3)}{m_2} p \\
		 \dot p &= -k_1\alpha_1(q_3) q_1 - k_2\alpha_2(q_3) q_2 \\
		\dot q_3 &= \frac{\pd F}{\pd p_3}(q_3,p_3)\\
		 \dot p_3 &= -\frac{\pd F}{\pd q_3}(q_3,p_3)
	\end{split}
\end{equation}
We have thus obtained an ordinary differential equation describing the system~\eqref{eq:main_system} in the variables $(q_1,q_2,q_3,p,p_3) \in  \R^{3}\times \R^{2}$, which parametrize $\Man$ (which we think of as a trivial fibre bundle over $Q=\R^{3}$).
Since the Hamiltonian is reversible it follows that the canonical reversibility map on $T^{*}\R^{3}$ restricts to the map $\rho\colon (q_1,q_2,q_3,p,p_3)\mapsto (q_1,q_2,q_3,-p,-p_3)$, and that system~\eqref{eq:contact_system_reduced} is reversible with respect to this map.

% subsection step_1 (end)

\subsection{Integrability} % (fold)
\label{sub:step_2}

First, since we assumed that the Hamiltonian subsystem~$(q_3,p_3)$ is integrable reversible,
let $a \in \R$ denote the action variable and $\theta \in \Torus$ the angle variable, and~$\omega(a)$ the corresponding angular velocity, so that the canonical reversibility map $(q_3,p_3)\mapsto (q_3,-p_3)$ becomes $(a,\theta)\mapsto (a,-\theta)$.
The system~\eqref{eq:contact_system_reduced}, with obvious abuse notation can now be written
\begin{equation}
\label{eq:reduced_small_action_angle}
	\begin{split}
		\dot q_1 &= \frac{\alpha_1(a,\theta)}{m_1} p \\
		\dot q_2 &= \frac{\alpha_2(a,\theta)}{m_2} p \\
		 \dot p &= -k_1\alpha_1(a,\theta) q_1 - k_2\alpha_2(a,\theta) q_2 \\
		\dot{a} &= 0,\\
		 \dot{\theta} &= \omega(a).
	\end{split}
\end{equation}
Let $\so(3)$ denote the Lie algebra of skew symmetric $3\times 3$~matrices.
The structure of this system becomes evident if we introduce the vector variable $\vect{u}=(\sqrt{{k_1}{m_1}}q_1,\sqrt{{k_2}{m_2}}q_2,p)$ and the~$\so(3)$ valued function
\begin{equation}\label{eq:matrix_fun}
	\matB(a,\theta) = 
	\begin{bmatrix}
		0 & 0 & \sqrt{\frac{k_1}{m_1}}\alpha_1(a,\theta) \\
		0 & 0 &  \sqrt{\frac{k_2}{m_2}}\alpha_2(a,\theta)\\
-\sqrt{\frac{k_1}{m_1}}\alpha_1(a,\theta) & -\sqrt{\frac{k_2}{m_2}}\alpha_2(a,\theta) & 0
	\end{bmatrix}.
\end{equation}
Then, in the variables $(\vect{u},a,\theta)\in \R^{3}\times\R\times \R/\Z$, system~\eqref{eq:reduced_small_action_angle} takes the form
\begin{equation}\label{eq:reduced_skew_form}
	\begin{split}
		\dot{\vect u} &= \matB(a,\theta)\vect{u} \\
		\dot a &= 0,\quad \dot\theta = \omega(a).
	\end{split}
\end{equation}
 
Our aim below is to show that equation~\eqref{eq:reduced_skew_form} is integrable for each fixed value of~$a$, which then proves that equation~\eqref{eq:contact_system_reduced} is integrable.
If $\omega(a)=0$, then $\matB(a,\theta)$ is a constant rotation matrix, so the system has invariant tori for these values of~$a$.
If $\omega(a) \neq 0$ then we can rescale the time variable so that the angular velocity of~$\theta$ is~$1$.
The system~\eqref{eq:reduced_skew_form} can then be written
\begin{equation}\label{eq:reduced_skew_scaled}
	\begin{split}
		\dot{\vect u} &= \matA_a(\theta)\vect{u} \\
		\dot\theta &= 1.
	\end{split}	
\end{equation}
where 
\begin{equation}
\label{eq:defmatA}
\matA_a(\theta) := \matB(a,\theta)/\omega(a)
.
\end{equation}
That this system is integrable is a consequence of the following more general result.

\begin{theorem}
\label{th:integrability}
	Consider a differential equation of the form 
	\begin{equation}
	\label{eq:autoper}
	\begin{split}
		\dot{\vect u} &= \matA\big(\theta\big)\vect u\\
		\dot{\theta} &= 1
	\end{split}
	\end{equation}
	where $(\vect u,\theta) \in \R^n \times \Torus$ are the dependent variables and $\matA \colon \Torus \to \so(n)
$ is a smooth mapping.
	% \begin{equation}
	% \matA \colon \Torus \to \so(n)
	% .
	% \end{equation}
	There is a smooth change of variables
	\begin{equation}
	\Psi \colon \R^n \times \Torus \ni (\vect u,\theta)\mapsto (\vect v(\vect u,\theta),\theta) \in \R^{n} \times \Torus
	\end{equation}
	such that system \eqref{eq:autoper} expressed in the new variables $(\vect v,\theta)$ takes the form
	% which takes the vector field of \eqref{eq:autoper} into the vector field represented by the differential equation
	\begin{equation}
	\begin{split}
	\dot{\vect v} &= \matAbar \vect v \\
	\dot{\theta} &= 1
	\end{split}
	\end{equation}
	for a constant matrix $\matAbar \in \so(n)$, whose spectrum $\sigma(\matAbar)$ is such that 
	\begin{equation}
	\label{eq:openspectrum}
	\sigma(\matAbar)\subset [-\ii \pi, \ii \pi]
	.
	\end{equation}
\end{theorem}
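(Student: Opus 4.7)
The plan is to prove this by a version of Floquet theory tailored to the compact group $\SO(n)$. First I would introduce the fundamental solution $\flow\colon\R\to\SO(n)$ of $\dot\flow=\matA(\theta)\flow$ with $\flow(0)=\mathbf{1}$. Since $\matA(\theta)\in\so(n)$, a standard argument (differentiating $\flow\trans\flow$) shows $\flow(\theta)\in\SO(n)$ for all $\theta$. Periodicity $\matA(\theta+1)=\matA(\theta)$ combined with uniqueness of solutions gives the cocycle identity $\flow(\theta+1)=\flow(\theta)\cdot M$, where $M:=\flow(1)\in\SO(n)$ is the monodromy.

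Next I would choose the constant matrix $\matAbar$. Because $\SO(n)$ is compact and connected, the exponential map $\exp\colon\so(n)\to\SO(n)$ is surjective, so there exists $\matAbar\in\so(n)$ with $\exp(\matAbar)=M$; this is the point where working in $\so(n)$ matters, since the analogous statement fails for general real Floquet theory (where one usually has to pass to the doubled period). Moreover, after block-diagonalising $M$ into $2\times 2$ rotation blocks (plus possibly a $\pm 1$ block), each block of $\matAbar$ can be taken to rotate by an angle in $[-\pi,\pi]$, so we can arrange the spectrum condition $\sigma(\matAbar)\subset[-\ii\pi,\ii\pi]$.

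Then I would define $P(\theta):=\flow(\theta)\exp(-\theta\matAbar)$. The cocycle identity together with $\exp(\matAbar)=M$ gives
\begin{equation*}
P(\theta+1)=\flow(\theta)M\exp(-\matAbar)\exp(-\theta\matAbar)=P(\theta),
\end{equation*}
so $P$ descends to a smooth map $\Torus\to\SO(n)$. Differentiating $\flow=P\exp(\theta\matAbar)$ yields $\dot P=\matA(\theta)P-P\matAbar$, i.e.\ $P^{-1}\matA(\theta)P-P^{-1}\dot P=\matAbar$. The change of variables $\vect v:=P(\theta)^{-1}\vect u$ (with $\theta$ unchanged) is then smooth, periodic in $\theta$, and invertible, and a direct calculation gives
\begin{equation*}
\dot{\vect v}=P^{-1}\dot{\vect u}-P^{-1}\dot P P^{-1}\vect u=\bigl(P^{-1}\matA(\theta)P-P^{-1}\dot P\bigr)\vect v=\matAbar\vect v,
\end{equation*}
as required. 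The main obstacle I anticipate is the choice of logarithm: one must simultaneously achieve $\exp(\matAbar)=M$ \emph{and} the eigenvalue bound \eqref{eq:openspectrum}; both are possible precisely because $\exp$ is surjective from $\so(n)$ onto $\SO(n)$ and because every element of $\SO(n)$ admits a block-diagonal normal form with rotation angles in $[-\pi,\pi]$. Once $\matAbar$ is fixed, periodicity of $P$ and the reduction to constant coefficients are purely algebraic consequences of $\exp(\matAbar)=M$.
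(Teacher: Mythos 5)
Your proposal is correct and follows essentially the same route as the paper: the same monodromy matrix $\flow(1)\in\SO(n)$, the same use of surjectivity of $\exp\colon\so(n)\to\SO(n)$ to pick $\matAbar$ with the spectral bound, and the same change of variables $\vect v=\exp(\theta\matAbar)\flow(\theta)\inv\vect u$, with periodicity following from the Floquet identity $\flow(\theta+1)=\flow(\theta)\flow(1)$. The only cosmetic difference is that you verify the constant-coefficient form by differentiating the periodic factor $P(\theta)$, while the paper checks it along solutions; the content is identical.
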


% A remarkable feature of that change of variable is that both frequencies depend only on $a$!
% A remarkable fact about that integrable system is that the angle velocity $C$ do not depend on the invariants $J$.

\begin{proof}

One defines the flow map $\flow(\thera)$ as the solution operator of the differential equation defined for all $\thera \in \R$ by
\begin{equation}\label{eq:nonautonomous}
	\frac{\ud\vect w(\thera)}{\ud\thera} = \matA(\thera)\vect w(\thera)	
\end{equation}
with initial condition at $\thera = 0$ --- the initial time matters because the differential equation is not autonomous.
% By an obvious abuse of notation, we denote by $\matA$ the function defined on $\Torus$ and the periodized version defined on $\R$.
This means that if $\vect w$ is a solution of \eqref{eq:nonautonomous}, then
\begin{equation}
\vect w(\thera) = \flow(\thera) \vect w(0) \qquad \forall \tau \in \R
,
\end{equation}
and vice versa.

Since $\matA(\thera)\in\so(n)$ for all $\thera$, the flow map after one period, i.e., $\perPhi$, is an element of $\SO(n)$.
As a result, there exists a matrix $\matAbar \in \so(n)$, with the restriction \eqref{eq:openspectrum} on the spectrum, such that
\begin{equation}
\label{eq:defmatAbar}
\perPhi = \exp(\matAbar)
.
\end{equation}
\newcommand*\bPsi{\overline{\Psi}}
We define the mapping $\bPsi \colon \R^n \times \R \to \R^n \times \R$ by $\bPsi(\vect u,\thera) = \big(\vect v(\vect u,\thera),\thera\big)$ and
\begin{equation}
\vect v(\vect u,\thera) := \exp(\matAbar \thera) \flow(\thera)\inv \vect u
.
\end{equation}

Consider a solution $ \big(\vect u(t),\thera(t)\big)$ of the differential equation 
\begin{equation}
	\begin{split}
		\dot{\vect u}(t) &= \matA\big(\tau(t)\big)\vect u(t) \\
		\dot{\tau}(t) &= 1
	\end{split}
\end{equation}
Define $t_0 = -\thera(0)$.
Clearly we then have $\thera(t) = t - t_0$.
By defining $\vect w(\tau) = \vect u(\tau + t_0)$ we obtain
\(
\vect w'(\tau) =  \dot{\vect u}(\thera + t_0) =  \matA\big(\theta( \thera + t_0)\big) =  \matA(\thera)
\),
so $\vect w$ is a solution of \eqref{eq:nonautonomous}.
As a result, $\vect w(\tau) = \flow(\tau)\vect w(0)$, which, using $\vect u(t) = \vect w\big(\thera(t)\big)$, gives 
\(
\vect u(t) = \flow({t-t_0}) \vect u(t_0) = \flow\big(\thera(t)\big) \vect u(t_0)
\).
We thus obtain that along a solution $\big(\vect u(t),\thera(t)\big)$,
\begin{equation}
\label{eq:phiinvuz}
\flow\big(\thera(t)\big)\inv \vect u(t) = \vect u(t_0)
.
\end{equation}
As a result, we obtain
\begin{equation}
\dot{\vect v}(t) = \matAbar \exp(\matAbar\thera) \vect u(t_0) = \matAbar \vect v(t)
,
\end{equation}
which proves the result for the mapping $\bPsi$.

Recall that, one of the pillars of Floquet's theory is that for any integer $n\in\Z$ and any $\tau\in\R$ we have (\cite[\S\!~28]{Ar2006})
\begin{equation}
\label{eq:Floquet}
\flow(n+\tau) = \flow(\tau)\flow(n)
.
\end{equation}
Now, $\vect v(\vect u,\thera)$ is periodic in $\thera$, of period one, because, due to the Floquet property \eqref{eq:Floquet}, and the definition \eqref{eq:defmatAbar} of $\matAbar$,
\begin{equation}
\exp\big(\matAbar(\thera+1)\big)\flow(\thera+1)\inv = \exp(\matAbar\thera)\exp(\matAbar)\perPhi\inv\flow(\thera)\inv =  \exp(\matAbar\thera)\flow(\thera)\inv
.
\end{equation}
As a result, the mapping $\bPsi$ induces a mapping $\Psi \colon \R^n \times \Torus \to \R^n \times \Torus$ which has the desired properties.
\end{proof}

% subsection step_2 (end)

\subsection{Reversibility} % (fold)
\label{sub:step_3}

We equip the space $\R^n\times\Torus$ with a linear involution $R$, defined as
\begin{equation}
\label{eq:Rdef}
R(\vect u,\theta) = (\rho \vect u, -\theta)
,
\end{equation}
for a given linear orthogonal involution $\rho$.
One verifies by a straightforward calculation that the differential equation \eqref{eq:autoper} is reversible with respect to~$R$ if and only if
\begin{equation}
\label{eq:Areversible}
\rho \matA(\theta) \rho = -\matA(-\theta)
.
\end{equation}

\begin{theorem}
Assume that \eqref{eq:Areversible} holds, and that $\theta\mapsto\matA(\theta)$ is periodic of period one.	
Assume further that the spectrum $\sigma(\matAbar)$ of the matrix $\matAbar$ defined in \autoref{th:integrability} fulfils
\begin{equation}
\label{eq:nohalfrotation}
\sigma(\matAbar) \subset (-\ii \pi, \ii \pi)
.
\end{equation}
Then the mapping $\Psi\colon \R^{n}\times\Torus\to\R^{n}\times\Torus$, defined in \autoref{th:integrability}, preserves reversibility, i.e., $R\circ\Psi = \Psi\circ R$.
\end{theorem}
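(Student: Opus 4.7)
The plan is to peel off the trivial second component of $R$ and reduce the identity $R\circ\Psi = \Psi\circ R$ to the single vector identity
\[
\rho\, \vect v(\vect u,\theta) = \vect v(\rho\vect u,-\theta)\qquad\forall(\vect u,\theta)\in\R^n\times\Torus .
\]
Inserting the defining formula $\vect v(\vect u,\theta) = \exp(\matAbar\theta)\flow(\theta)\inv\vect u$ and using $\rho^{2}=I$, the task decomposes into two independent conjugation identities:
\[
\rho\,\exp(\matAbar\theta)\,\rho = \exp(-\matAbar\theta),\qquad \rho\,\flow(\theta)\inv\rho = \flow(-\theta)\inv .
\]

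For the flow identity I would run the textbook reversibility argument at the level of the non-autonomous equation~\eqref{eq:nonautonomous}: from~\eqref{eq:Areversible}, a direct computation shows that $\tilde{\vect w}(\tau) := \rho\vect w(-\tau)$ satisfies the same ODE as $\vect w$, so uniqueness forces $\rho\flow(-\tau)\rho = \flow(\tau)$ for all $\tau\in\R$, whence the claimed identity. This step uses only~\eqref{eq:Areversible} and makes no reference to the Floquet logarithm.

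For the exponential identity the key point is to upgrade the reversibility of the period map $\perPhi$ to a reversibility of its logarithm $\matAbar$. Specialising the previous identity to $\tau=1$ and combining with Floquet's composition rule~\eqref{eq:Floquet} (with $n=-1$, $\tau=1$), which gives $\flow(-1)=\perPhi\inv$, I obtain $\rho\perPhi\rho = \perPhi\inv$. Composing with the definition~\eqref{eq:defmatAbar} yields
\[
\exp(\rho\matAbar\rho) = \exp(-\matAbar).
\]
Because $\matAbar\in\so(n)$, its spectrum is purely imaginary and symmetric about the origin, so both $\rho\matAbar\rho$ (a conjugation) and $-\matAbar$ lie in $\so(n)$ with spectrum still contained in the open strip $(-\ii\pi,\ii\pi)$ by~\eqref{eq:nohalfrotation}. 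On this set the matrix exponential is injective, so passing to logarithms gives $\rho\matAbar\rho = -\matAbar$, and the second conjugation identity follows for all $\theta$.

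The main obstacle, and the only nontrivial use of a hypothesis beyond what \autoref{th:integrability} already provides, is this injectivity step. The weaker inclusion~\eqref{eq:openspectrum} is not sufficient: if $\matAbar$ were allowed an eigenvalue at $\pm\ii\pi$, then $\perPhi$ would carry the eigenvalue $-1$, at which the matrix logarithm is not single-valued, and a ``half-rotation'' ambiguity could produce a choice of $\matAbar$ that fails to be reversibility-conjugated to its negative even though $\perPhi$ itself still satisfies $\rho\perPhi\rho = \perPhi\inv$. The strict inclusion~\eqref{eq:nohalfrotation} is the clean hypothesis ruling out this obstruction.
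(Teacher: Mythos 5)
Your proposal is correct and follows essentially the same route as the paper: derive $\rho\,\flow(-\tau)=\flow(\tau)\,\rho$ from \eqref{eq:Areversible}, use the Floquet property to get $\rho\,\perPhi\,\rho=\perPhi\inv$, pass to the logarithm via injectivity of $\exp$ on skew-symmetric matrices with spectrum in $(-\ii\pi,\ii\pi)$ to conclude $\rho\,\matAbar\,\rho=-\matAbar$, and combine the two conjugation identities to obtain $\vect v(\rho\vect u,-\theta)=\rho\,\vect v(\vect u,\theta)$. Your closing remark on why the strict inclusion \eqref{eq:nohalfrotation} is needed matches the paper's own remark on half rotations.
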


% We now proceed to show that, assuming \eqref{eq:Areversible}, the variable transformation $(u,\theta) \to (\axis,\act_i,\ang_i)$ defined in \autoref{} preserves the reversibility, where the reversibility structure in $(\axis,\act_i,\ang_i)$ simply reverses the sign of the angles $\ang_i$.

\begin{proof}

Using \eqref{eq:Areversible}, one shows that
\begin{equation}
\rho\flow(-\tau) =  \flow(\tau) \rho
.
\end{equation}
This implies in particular that
\(
\rho \flow(-1) = \perPhi \rho
\),
so using the Floquet property \eqref{eq:Floquet}, we obtain
\begin{equation}
\label{eq:mirrorrot}
\rho \perPhi\inv = \perPhi \rho
.
\end{equation}
Now, recalling the definition of $\matAbar$ in \eqref{eq:defmatAbar}, we notice that \eqref{eq:mirrorrot} implies that
\begin{equation}
\exp(-\matAbar) = \rho\exp(\matAbar)\rho = \exp(\rho\matAbar\rho)
\end{equation}
and, using the assumption on the spectrum of $\matAbar$, and the fact that the eigenvalues of $\rho\matAbar\rho$ are the same as those of $\matAbar$, we deduce that $-\matAbar = \rho \matAbar \rho$.
We therefore obtain
\begin{equation}
\label{eq:commutexprho}
\exp(-\matAbar \thera) \rho = \rho \exp(\matAbar\thera)
.
\end{equation}
By combining \eqref{eq:mirrorrot} and \eqref{eq:commutexprho} we get
\begin{equation}
\begin{split}
\vect v(\rho \vect u, -\thera) &= \exp(-\matAbar\thera)\flow(-\thera)\inv \rho \vect u  \\
&= \exp(-\matAbar\thera) \rho \flow(\thera)\inv \vect u \\
&= \rho \exp(\matAbar \thera) \flow(\thera)\inv \vect u
\end{split}
\end{equation}
so we finally obtain
\begin{equation}
\vect v(\rho \vect u, -\thera) = \rho \vect v
.
\end{equation}
This finishes the proof.
\end{proof}

\begin{remark}
	The condition \eqref{eq:nohalfrotation} is a non-resonance assumption.
	We already know that the matrix $\matAbar$ is chosen to satisfy \eqref{eq:openspectrum}, but we cannot exclude the extreme case of one or more half rotations.
\end{remark}

% subsection step_3 (end)

\subsection{Wrapping up} % (fold)
\label{sec:wrapup}

We finish the proof by putting all the pieces together.
Notice that with the canonical mapping $\rho \colon \R^3 \to \R^3$ defined as $(q_1,q_2,p) \mapsto (q_1,q_2,-p)$, the property \eqref{eq:Areversible} is fulfilled for the matrix $\matA_a$ defined in \eqref{eq:defmatA}.
We thus obtain that, in the special case of $\R^3$, the map $\Psi$ defined in \autoref{th:integrability} transforms the original system to an integrable one, and the map $\Psi$ is reversible.
In $\R^3$, all rotations have one axis of rotation, so for all the values of the invariant $a$, we obtain two more invariants, and one angle variable.
In the end, we thus obtain three invariants and two angle variables.
% Note however that we cannot apply the reversible KAM theory directly because we have two angle velocities that depend on only one invariant.

% section proofs_of_main_results (end)

\section*{Acknowledgements} % (fold)
\label{sec:acknowledgements}

The research was supported by a Marie Curie International Research Staff Exchange Scheme Fellowship within the 7th European Community Framework Programme and by the Swedish Research Council contract VR-2012-335.

% section acknowledgements (end)

% ----------------------------
%			BIBLIOGRAPHY (fold)
% ----------------------------

\bibliographystyle{amsplainnat} % (plain, alpha, abbrv, acm, ieee, abbrvnat)
\bibliography{references}

% ----------------------------
%			BIBLIOGRAPHY (end)
% ----------------------------

\end{document}